\theoremstyle{plain}
\newtheorem{thm}{Theorem}[section]
\newtheorem{lem}[thm]{Lemma}
\newtheorem{prop}[thm]{Proposition}
\theoremstyle{definition}
\newtheorem*{defn}{Definition}
\newtheorem*{ex}{Example}
\theoremstyle{remark}
\newtheorem*{rem}{Remark}
\newtheorem*{ack}{Acknowledgement}
\title[C$^*$-algebras generated by composition operators]
{C$^*$-algebras generated by multiplication operators and
composition operators with self-similar maps}
\author{Hiroyasu Hamada}
\address{National Institute of Technology (KOSEN), Sasebo College, 
Okishin, Sasebo, Nagasaki, 857-1193, Japan.}
\email{h-hamada@sasebo.ac.jp}
\keywords{composition operator, multiplication operator,
C$^*$-algebra, self-similar}
\subjclass[2020]{Primary 46L55, 47B33; Secondary 28A80, 46L08}
\begin{document}

\begin{abstract}
Let $K$ be a compact metric space and
let $\gamma = (\gamma_1, \dots, \gamma_n)$
be a system of proper contractions on $K$. 
We study a C$^*$-algebra $\mathcal{MC}_{\gamma_1, \dots, \gamma_n}$
generated by
all multiplication operators by continuous functions on $K$
and composition operators $C_{\gamma_i}$
induced by $\gamma_i$ for $i=1, \dots, n$ on
a certain $L^2$ space.
Suppose that $K$ is self-similar.
We consider the Hutchinson
measure $\mu^H$ of $\gamma$ and the $L^2$ space $L^2(K, \mu^H)$.
Then we show
that the C$^*$-algebra $\mathcal{MC}_{\gamma_1, \dots, \gamma_n}$
is isomorphic to
the Cuntz algebra $\mathcal{O}_n$ under some conditions.
\end{abstract}

\maketitle

\section{Introduction}

Several authors considered
C$^*$-algebras generated by composition operators
(and Toeplitz operators) on the Hardy space $H^2(\mathbb{D})$ on the open
unit disk
$\mathbb{D}$ (\cite{H1, HW, J1, J2, KMM1, KMM3, KMM2, Pa, Q, Q2, SA}).
On the other hand, there are some studies on C$^*$-algebras generated by
composition operators on $L^2$ spaces, for example \cite{H2, H3, M}.
Matsumoto \cite{M} introduced some
C$^*$-algebras associated with cellular automata generated by composition
operators and multiplication operators.
Let $R$ be a rational function of degree at least two,
let $J_R$ be the Julia set of $R$ and let $\mu^L$ be the Lyubich measure
of $R$. In \cite{H2}, we studied the C$^*$-algebra $\mathcal{MC}_R$
generated by
all multiplication operators by continuous functions in $C(J_R)$
and the composition operator $C_R$ induced by $R$
on $L^2(J_R, \mu^L)$.

Let $K$ be a compact metric space, 
let $\gamma = (\gamma_1, \dots, \gamma_n)$ be a system of proper contractions
on $K$ and let $\varphi: K \to K$ be measurable.
Suppose that $\gamma_1, \dots, \gamma_n$ are inverse branches of
$\varphi$ and $K$ is self-similar. Let $\mu^H$ be the Huchinson measure of
$\gamma$. 

In \cite{H3}, we studied the C$^*$-algebra $\mathcal{MC}_\varphi$
generated by
all multiplication operators by continuous functions in $C(K)$
and the composition operator $C_\varphi$ induced by $\varphi$
on $L^2(K, \mu^H)$.
Then the C$^*$-algebra $\mathcal{MC}_\varphi$ is isomorphic to
the C$^*$-algebra $\mathcal{O}_\gamma (K)$ associated
with $\gamma$ introduced in \cite{KW} under some condition.

In this paper we study
a C$^*$-algebra $\mathcal{MC}_{\gamma_1, \dots, \gamma_n}$
generated by all multiplication operators by continuous functions in $C(K)$
and composition operators $C_{\gamma_i}$
induced by $\gamma_i$ for $i=1, \dots, n$ on $L^2(K, \mu^H)$.
Then we show
that the C$^*$-algebra $\mathcal{MC}_{\gamma_1, \dots, \gamma_n}$
is isomorphic to
the Cuntz algebra $\mathcal{O}_n$ under some conditions.

We can prove the main theorem using Cuntz-Pimsner algebras
and a proposition in \cite{PWY}. However we prove this theorem
directly using multiplication operators and composition operators
in this paper.

\section{Main Theorem}

Let $(K,d)$ be a compact metric space. A continuous map $\gamma: K \to K$
is called a {\it proper contraction} if there exists constants
$0 < c_1 \leq c_2 < 1$ such that
\[
   c_1 d(x, y) \leq d(\gamma(x), \gamma(y)) \leq c_2 d(x, y), \quad x, y \in K.
\]

Let $\gamma = (\gamma_1, \dots, \gamma_n)$ be a family of proper contractions
on $(K,d)$. We say that $K$ is called {\it self-similar}
with respect to $\gamma$ if $K = \bigcup_{i = 1} ^n \gamma_i (K)$.
See \cite{F} and \cite{Ki} for more on fractal sets.

Let us denote by $\mathcal{B}(K)$ the Borel $\sigma$-algebra on $K$.

\begin{lem}[\cite{Hu}] \label{lem:Hutchinson measure}
Let $K$ be a compact metric space and let $\gamma$ be a system of proper
contractions. If $p_1, \dots, p_n \in \mathbb{R}$ satisfy
$\sum_{i=1} ^n p_i = 1$ and $p_i > 0$ for $i$,
then there exists a unique probability measure $\mu$ on $K$ such that
\[
   \mu(E) = \sum_{i=1}^n  p_i \mu (\gamma_i ^{-1}(E))
\]
for $E \in \mathcal{B}(K)$.
\end{lem}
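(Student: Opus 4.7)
The plan is to apply the Banach fixed point theorem on the space $M(K)$ of Borel probability measures on $K$, equipped with a complete metric that is sensitive to the contraction behavior of the maps $\gamma_i$. Define the Hutchinson operator $T: M(K) \to M(K)$ by
\[
   (T\mu)(E) = \sum_{i=1}^n p_i \mu(\gamma_i^{-1}(E)), \quad E \in \mathcal{B}(K).
\]
One first checks that $T\mu$ is indeed a Borel probability measure: countable additivity follows from that of $\mu$, and $(T\mu)(K) = \sum_i p_i \mu(\gamma_i^{-1}(K)) = \sum_i p_i = 1$ since $\gamma_i^{-1}(K) = K$. A fixed point of $T$ is precisely a measure satisfying the invariance equation in the statement, so the existence and uniqueness claim reduces to showing that $T$ has a unique fixed point.

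The metric I would use is the Hutchinson (Kantorovich--Wasserstein) metric
\[
   d_H(\mu, \nu) = \sup\Bigl\{ \Bigl| \int_K f \, d\mu - \int_K f \, d\nu \Bigr| : f: K \to \mathbb{R}, \ \mathrm{Lip}(f) \leq 1 \Bigr\}.
\]
Since $K$ is compact, it is standard that $(M(K), d_H)$ is a complete metric space (in fact compact, and $d_H$ metrizes the weak-$*$ topology). To show $T$ is a contraction, observe that by the change-of-variables formula,
\[
   \int_K f \, d(T\mu) = \sum_{i=1}^n p_i \int_K (f \circ \gamma_i) \, d\mu.
\]
If $f$ is $1$-Lipschitz, then each $f \circ \gamma_i$ is $c_2$-Lipschitz by the upper contraction bound, and so $f \circ \gamma_i / c_2$ is $1$-Lipschitz. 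Summing with weights $p_i$ and applying the definition of $d_H$ yields $d_H(T\mu, T\nu) \leq c_2 \, d_H(\mu, \nu)$, and since $c_2 < 1$ this is the required contraction estimate.

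The Banach fixed point theorem then produces a unique fixed point $\mu \in M(K)$, which proves both existence and uniqueness. The main obstacle in a from-scratch presentation is the completeness of $(M(K), d_H)$; I would either cite this as a standard fact from optimal transport or deduce it from Prokhorov's theorem together with the fact that $d_H$ induces the weak-$*$ topology on a compact metric space. Verification that the invariance equation for Borel sets $E$ is equivalent to the integral identity $\int f\, d\mu = \sum_i p_i \int (f \circ \gamma_i)\, d\mu$ for $f \in C(K)$ is routine via monotone class / dominated convergence arguments, so the fixed point of $T$ gives the desired measure.
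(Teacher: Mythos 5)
The paper does not prove this lemma at all; it is quoted from Hutchinson's paper \cite{Hu}, and your argument is precisely the classical proof given there: the Hutchinson operator $T$ is a $c$-contraction on $(M(K), d_H)$ with $c = \max_i c_{2,i} < 1$ (note that each $\gamma_i$ has its own upper Lipschitz constant, so you should take the maximum over $i$ rather than a single $c_2$), and completeness of $(M(K), d_H)$ for compact $K$ is standard. Your proof is correct and complete; the only step you could trim is the final paragraph, since $T$ is defined directly on Borel sets, so the fixed-point equation $T\mu = \mu$ is already the stated invariance identity and no passage back from the integral formulation is needed beyond knowing that $d_H(\mu,\nu)=0$ forces $\mu=\nu$.
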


We call the measure $\mu$ given by Lemma \ref{lem:Hutchinson measure}
the {\it self-similar measure} on $K$ with $\{p_i \}_{i = 1} ^n$.
In particular, we denote by $\mu^H$ the self-similar measure with
$p_i = \frac{1}{n}$ for $i$ and call this measure the {\it Hutchinson measure}.
We say that $\gamma$ satisfies the {\it measure separation condition} in $K$
if $\mu (\gamma_i (K) \cap \gamma_j (K)) = 0$ for any self-similar measure $\mu$and $i \neq j$.

For $a \in L^\infty (K, \mathcal{B}(K), \mu^H)$, we define
the multiplication operator $M_a$ on
$L^2 (K, \mathcal{B}(K), \mu^H)$ by
$M_a f = a f$ for $f \in L^2 (K, \mathcal{B}(K), \mu^H)$.
Let $\varphi : K \to K$ be measurable.
Suppose that $\gamma_1, \dots, \gamma_n$ are inverse
branches of $\varphi$, that is, $\varphi (\gamma_i (x)) = x$
for $x \in K$ and $i = 1, \dots, n$.

Let $i = 1, \dots, n$.
Set $({\gamma_i}_* \mu^H) (E)
= \mu^H (\gamma_i ^{-1} (E))$ and
 $(\mu^H \circ \gamma_i) (E) = \mu^H(\gamma_i(E))$
for $E \in \mathcal{B}(K)$.
Then ${\gamma_i}_* \mu^H$ and $\mu^H \circ \gamma_i$ are measures on $K$.

\begin{lem}  \label{lem:measure}
Let $\gamma = (\gamma_1, \dots, \gamma_n)$ be a system of proper contractions
on $K$. Assume that $K$ is self-similar and
the system $\gamma = (\gamma_1, \dots, \gamma_n)$ satisfies
the measure separation condition in $K$.
Then we have
\[
  (\mu^H \circ \gamma_i)(E) = \frac{1}{n} \mu^H(E)
\]
for $E \in \mathcal{B}(K)$.
\end{lem}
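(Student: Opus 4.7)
The plan is to start from the Hutchinson invariance (Lemma \ref{lem:Hutchinson measure}) applied with $p_i = 1/n$, namely $\mu^H(F) = \frac{1}{n}\sum_{j=1}^n \mu^H(\gamma_j^{-1}(F))$, and evaluate it at $F = \gamma_i(E)$. First I need to know that $\gamma_i(E)$ is actually Borel, so that the left-hand side makes sense. This follows because the lower Lipschitz bound $c_1 d(x,y) \leq d(\gamma_i(x),\gamma_i(y))$ forces $\gamma_i$ to be injective; as a continuous injection on a compact metric space, $\gamma_i: K \to \gamma_i(K)$ is a homeomorphism, so it sends Borel sets to Borel sets.

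Next I would split the sum into the term $j=i$ and the terms $j \neq i$. Injectivity of $\gamma_i$ yields $\gamma_i^{-1}(\gamma_i(E)) = E$, so the $j=i$ term contributes exactly $\frac{1}{n}\mu^H(E)$. For $j \neq i$, any point $y \in \gamma_j^{-1}(\gamma_i(E))$ satisfies $\gamma_j(y) \in \gamma_i(E) \cap \gamma_j(K) \subseteq \gamma_i(K) \cap \gamma_j(K)$, hence
\[
   \gamma_j^{-1}(\gamma_i(E)) \subseteq \gamma_j^{-1}\bigl(\gamma_i(K) \cap \gamma_j(K)\bigr).
\]

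To kill these cross terms I invoke the measure separation condition, which gives $\mu^H(\gamma_i(K) \cap \gamma_j(K)) = 0$. Applying the Hutchinson invariance to the set $A = \gamma_i(K) \cap \gamma_j(K)$ yields $0 = \mu^H(A) = \frac{1}{n}\sum_k \mu^H(\gamma_k^{-1}(A))$; since every summand is nonnegative, each $\mu^H(\gamma_k^{-1}(A))$ vanishes, and in particular $\mu^H(\gamma_j^{-1}(\gamma_i(E))) = 0$ by monotonicity. Summing up, only the $j = i$ contribution survives, giving $\mu^H(\gamma_i(E)) = \frac{1}{n}\mu^H(E)$.

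I do not expect a serious obstacle: the argument is essentially a one-line application of the fixed-point equation for $\mu^H$, combined with the observation that the ``bad'' intersections $\gamma_i(K) \cap \gamma_j(K)$ pull back under any $\gamma_k$ to null sets. The only subtle point is the measurability of $\gamma_i(E)$, which is immediate once one notes that $\gamma_i$ is a homeomorphism onto its image.
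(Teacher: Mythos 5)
Your proof is correct and follows essentially the same route as the paper: apply the Hutchinson invariance $\mu^H(F)=\frac{1}{n}\sum_j\mu^H(\gamma_j^{-1}(F))$ to $F=\gamma_i(E)$, use injectivity of $\gamma_i$ to identify the $j=i$ term with $\frac{1}{n}\mu^H(E)$, and kill the $j\neq i$ terms via the measure separation condition. In fact your write-up is more careful than the paper's two-line proof, which leaves the vanishing of the cross terms (and the Borel measurability of $\gamma_i(E)$) implicit.
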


\begin{proof}
By the definition of Hutchinson measure, we have
\[
   \mu^H (E) = \frac{1}{n} \sum_{i=1} ^n \mu^H (\gamma_i ^{-1}(E))
\]
for $E \in \mathcal{B}(K)$.
Since $\gamma_i$ is a proper contraction, $\gamma_i: K \to \gamma_i(K)$ is
bijective. Thus we obtain the desired equation.
\end{proof}

We can calculate the Radon-Nikodym derivative
$\frac{d {\gamma_i}_* \mu^H}{d \mu^H}$ in the following lemma.

\begin{lem} \label{lem:Radon-Nikodym}
Let $\gamma = (\gamma_1, \dots, \gamma_n)$ be a system of proper contractions
on $K$. Assume that $K$ is self-similar and
the system $\gamma = (\gamma_1, \dots, \gamma_n)$ satisfies
the measure separation condition in $K$.
Then we have
\[
   \frac{d {\gamma_i}_* \mu^H}{d \mu^H} (x)
   = \begin{cases}
      n \quad & \text{if} \quad x \in \gamma_i (K), \\
      0 \quad & \text{if} \quad x \notin \gamma_i (K)
   \end{cases}
\]
for $i = 1, \dots, n$.
\end{lem}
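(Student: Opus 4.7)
The plan is to compute $({\gamma_i}_*\mu^H)(E)$ directly for an arbitrary Borel set $E$ and recognise the answer as the integral over $E$ of the function $n \mathbf{1}_{\gamma_i(K)}$ against $\mu^H$; by uniqueness of the Radon-Nikodym derivative this yields the formula. A preliminary step is to check absolute continuity ${\gamma_i}_*\mu^H \ll \mu^H$, which is immediate from the Hutchinson identity $\mu^H(E) = \frac{1}{n}\sum_{j=1}^n \mu^H(\gamma_j^{-1}(E))$: if $\mu^H(E)=0$ then every summand vanishes and in particular $({\gamma_i}_*\mu^H)(E) = \mu^H(\gamma_i^{-1}(E)) = 0$.

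Next I would split $E$ as the disjoint union $(E \cap \gamma_i(K)) \sqcup (E \setminus \gamma_i(K))$. The second piece contributes nothing, since $\gamma_i^{-1}(E \setminus \gamma_i(K)) = \emptyset$, so $({\gamma_i}_*\mu^H)(E \setminus \gamma_i(K)) = 0$. For the first piece, I put $F := \gamma_i^{-1}(E \cap \gamma_i(K))$; because $\gamma_i : K \to \gamma_i(K)$ is a bijection (its lower contraction constant $c_1 > 0$ forces injectivity), we have $\gamma_i(F) = E \cap \gamma_i(K)$. Applying Lemma \ref{lem:measure} to $F$ then gives
\[
   \mu^H(E \cap \gamma_i(K)) = \mu^H(\gamma_i(F)) = \frac{1}{n}\mu^H(F) = \frac{1}{n}\mu^H\!\bigl(\gamma_i^{-1}(E \cap \gamma_i(K))\bigr),
\]
so that $\mu^H(\gamma_i^{-1}(E \cap \gamma_i(K))) = n\,\mu^H(E \cap \gamma_i(K))$.

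Combining the two pieces yields
\[
   ({\gamma_i}_*\mu^H)(E) = \mu^H\!\bigl(\gamma_i^{-1}(E)\bigr) = n\,\mu^H(E \cap \gamma_i(K)) = \int_E n \mathbf{1}_{\gamma_i(K)}\, d\mu^H,
\]
which identifies the Radon-Nikodym derivative as $n \mathbf{1}_{\gamma_i(K)}$, giving exactly the piecewise formula in the statement.

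There is no real obstacle here: the only subtlety is keeping track of the roles of the measure separation condition and self-similarity, which enter through Lemma \ref{lem:measure}, and remembering that $\gamma_i$ only sees the part of $E$ inside $\gamma_i(K)$. Absolute continuity has to be noted explicitly so that "Radon-Nikodym derivative" is a legitimate object to talk about in the first place.
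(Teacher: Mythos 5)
Your proof is correct, but it takes a genuinely different route from the paper's. The paper argues in two stages: first it shows $\frac{d{\gamma_i}_*\mu^H}{d\mu^H}=0$ off $\gamma_i(K)$ by checking ${\gamma_i}_*\mu^H(\gamma_j(E))=0$ for $j\neq i$ (via the measure separation condition and Lemma \ref{lem:measure}), and then it invokes the identity $\sum_{i=1}^n \frac{d{\gamma_i}_*\mu^H}{d\mu^H}=n$, which follows from the Hutchinson relation, to force the value $n$ on $\gamma_i(K)$; this last step implicitly uses that the overlaps $\gamma_i(K)\cap\gamma_j(K)$ are $\mu^H$-null so that almost every point carries only one nonzero summand. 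You instead compute the pushforward measure in closed form, $({\gamma_i}_*\mu^H)(E)=n\,\mu^H(E\cap\gamma_i(K))$, by splitting $E$ along $\gamma_i(K)$ and applying Lemma \ref{lem:measure} to $F=\gamma_i^{-1}(E\cap\gamma_i(K))$, and then read off the derivative by uniqueness in the Radon--Nikodym theorem. Both arguments ultimately lean on Lemma \ref{lem:measure} (hence on self-similarity and measure separation), but yours is more self-contained: it delivers the formula for ${\gamma_i}_*\mu^H$ on every Borel set in one pass, sidesteps the almost-everywhere bookkeeping needed to combine the paper's equations (1) and (2), and makes explicit the absolute continuity ${\gamma_i}_*\mu^H\ll\mu^H$ that legitimises speaking of the derivative at all, a point the paper leaves tacit. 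The only detail worth flagging is measurability of $\gamma_i(F)$ for Borel $F$ (needed to apply Lemma \ref{lem:measure}), which holds because $\gamma_i$ is a continuous injection on a compact metric space; the paper makes the same implicit assumption.
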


\begin{proof}
Fix $E \in \mathcal{B}(K)$ and $i \neq j$.
We consider $F := \gamma_i ^{-1}(\gamma_j (E))$.
Since the system $\gamma = (\gamma_1, \dots, \gamma_n)$ satisfies
the measure separation condition in $K$, we have
\[
   {\gamma_i}_* \mu^H (\gamma_j(E)) = \mu^H (F)
   = n \mu^H (\gamma_i(F))
   \leq n \mu^H (\gamma_i(K) \cap \gamma_j(K)) = 0
\]
by Lemma \ref{lem:measure}.
Thus ${\gamma_i}_* \mu^H (\gamma_j(E)) = 0$.
It folows that
\begin{align} \label{eq:(1)}
\frac{d {\gamma_i}_* \mu^H}{d \mu^H} (x) = 0
\end{align} 
for $x \notin \gamma_i (K)$.
On the other hand, by Lemma \ref{lem:Hutchinson measure}, we have
\begin{align} \label{eq:(2)}
\sum_{i = 1} ^n \frac{d {\gamma_i}_* \mu^H}{d \mu^H} = n.
\end{align}
From (\ref{eq:(1)}) and (\ref{eq:(2)}), we obtain the desired conclusion.
\end{proof}

Write $C_{\gamma_i} f = f \circ \gamma_i$ for
$f \in L^2(K, \mathcal{B}(K), \mu^H)$.
Assume that $K$ is self-similar and
the system $\gamma = (\gamma_1, \dots, \gamma_n)$ satisfies
the measure separation condition in $K$.
By Lemma \ref{lem:Radon-Nikodym},
the Radon-Nikodym derivative 
$\frac{d {\gamma_i}_* \mu^H}{d \mu^H}$ is bounded.
By \cite[Theorem 2.1.1]{SM}, the operator
$C_{\gamma_i}: L^2( K , \mathcal{B}(K), \mu^H) \to
L^2( K , \mathcal{B}(K), \mu^H)$ is bounded.
It is called a composition  operator on $L^2( K , \mathcal{B}(K), \mu^H)$
induced by $\gamma_i$.
Set $V_i = \frac{1}{ \sqrt{n}} C_{\gamma_i} ^*$.

\begin{prop} \label{prop:Cuntz_relation}
Let $\gamma = (\gamma_1, \dots, \gamma_n)$ be a system of proper contractions
on $K$ and let $\varphi: K \to K$ be measurable.
Suppose that $\gamma_1, \dots, \gamma_n$ are inverse branches of
$\varphi$. Assume that $K$ is self-similar and
the system $\gamma = (\gamma_1, \dots, \gamma_n)$ satisfies
the measure separation condition in $K$.
Then we have
\[
   V_i ^* V_i = I \quad \text{and} \quad \sum_{j = 1} ^n V_j V_j ^* = I
\]
for $i = 1, \dots, n$.
\end{prop}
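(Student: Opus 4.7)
The plan is to derive an explicit formula for $C_{\gamma_i}^*$ using Lemma \ref{lem:Radon-Nikodym}, and then compose directly to verify both Cuntz relations. The core identity I want to establish is
\[
   (C_{\gamma_i}^* g)(y) = n \, \chi_{\gamma_i(K)}(y) \, g(\gamma_i^{-1}(y))
\]
for $g \in L^2(K, \mathcal{B}(K), \mu^H)$, where $\gamma_i^{-1}$ is the inverse of the bijection $\gamma_i : K \to \gamma_i(K)$ (the value outside $\gamma_i(K)$ is immaterial because of the indicator). To derive this, I would start from $\langle C_{\gamma_i} f, g\rangle = \int_K f(\gamma_i(x)) \overline{g(x)}\, d\mu^H(x)$ and change variables $y = \gamma_i(x)$ using the pushforward ${\gamma_i}_* \mu^H$. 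The bijectivity of $\gamma_i$ onto $\gamma_i(K)$ (Lemma \ref{lem:measure}) lets me rewrite this as an integral over $\gamma_i(K)$ against ${\gamma_i}_* \mu^H$, and then Lemma \ref{lem:Radon-Nikodym} replaces $d{\gamma_i}_* \mu^H$ by $n\, \chi_{\gamma_i(K)}\, d\mu^H$, producing the formula above.

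Next I would verify the isometry relation. Since $V_i^* = \frac{1}{\sqrt{n}} C_{\gamma_i}$, a direct composition gives
\[
   (V_i^* V_i g)(x) = \tfrac{1}{\sqrt{n}}(V_i g)(\gamma_i(x))
   = \chi_{\gamma_i(K)}(\gamma_i(x)) \, g(\gamma_i^{-1}(\gamma_i(x))) = g(x),
\]
where I use $\gamma_i(x) \in \gamma_i(K)$ and the fact that $\gamma_i^{-1} \circ \gamma_i$ is the identity on $K$. Hence $V_i^* V_i = I$.

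For the second relation, the same calculation in the opposite order yields
\[
   (V_j V_j^* g)(y) = \chi_{\gamma_j(K)}(y) \, g(\gamma_j(\gamma_j^{-1}(y))) = \chi_{\gamma_j(K)}(y) \, g(y),
\]
so $V_j V_j^*$ is multiplication by $\chi_{\gamma_j(K)}$. Summing and invoking self-similarity $K = \bigcup_j \gamma_j(K)$ together with the measure separation condition $\mu^H(\gamma_i(K) \cap \gamma_j(K)) = 0$ for $i \ne j$, I get $\sum_{j=1}^n \chi_{\gamma_j(K)} = 1$ holding $\mu^H$-almost everywhere, which gives $\sum_j V_j V_j^* = I$ as operators on $L^2(K, \mu^H)$.

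The main technical hurdle is the change-of-variables step that produces the formula for $C_{\gamma_i}^*$, because $\gamma_i$ is only bijective onto the proper subset $\gamma_i(K) \subseteq K$; once this is handled correctly the indicator $\chi_{\gamma_i(K)}$ appears naturally, and the rest is bookkeeping. The two relations $\chi_{\gamma_i(K)} \circ \gamma_i \equiv 1$ and $\sum_j \chi_{\gamma_j(K)} \equiv 1$ $\mu^H$-a.e. are precisely the measure-theoretic counterparts of the algebraic Cuntz relations and drive the conclusion.
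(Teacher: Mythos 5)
Your proposal is correct, and it reaches the two relations by a route that is computationally equivalent to the paper's but packaged differently. The paper never writes down a formula for $C_{\gamma_i}^*$; instead it introduces the subspaces $\mathcal{K}_i$ of functions supported on $\gamma_i(K)$, shows via the same change of variables and Lemma \ref{lem:Radon-Nikodym} that $V_i^*$ restricts to a surjective isometry $\mathcal{K}_i \to L^2(K,\mu^H)$ (and vanishes on $\mathcal{K}_i^\perp$), and reads off both relations from the resulting partial-isometry structure, with $V_jV_j^*$ identified as the orthogonal projection onto $\mathcal{K}_j$. You instead compute the adjoint explicitly, $(C_{\gamma_i}^*g)(y)=n\,\chi_{\gamma_i(K)}(y)\,g(\gamma_i^{-1}(y))$, and compose; this makes $V_jV_j^*=M_{\chi_{\gamma_j(K)}}$ transparent, so the second relation reduces to the $\mu^H$-a.e.\ identity $\sum_j \chi_{\gamma_j(K)}=1$, which is exactly where self-similarity and measure separation enter in both arguments. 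One genuine difference worth noting: the paper's surjectivity step constructs a preimage using $\varphi$, whereas your argument uses only $\gamma_i^{-1}$ on $\gamma_i(K)$ (available since a proper contraction on a compact space is a homeomorphism onto its image, so $\gamma_i^{-1}$ is continuous and your formula is measurable); thus your proof shows the hypothesis that the $\gamma_i$ are inverse branches of a global measurable $\varphi$ is not actually needed for this proposition. The only cosmetic blemish is that in your displayed chain for $V_i^*V_ig$ the factor $\sqrt{n}$ carried by $V_i$ is suppressed in the middle term, but the constants cancel exactly as you assert, so the conclusion stands.
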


\begin{proof}
We consider the closed subspace
$\mathcal{K}_i = \{ f \in L^2(K, \mathcal{B}(K), \mu^H) \, | \,  f \, \, \text{vanishes on} \, K \smallsetminus \gamma_i(K) \}$
of $L^2(K, \mathcal{B}(K), \mu^H)$.
Since the system $\gamma = (\gamma_1, \dots, \gamma_n)$ satisfies
the measure separation condition in $K$, we have
$\mathcal{K}_i \perp \mathcal{K}_j$ for $i \neq j$. It is
sufficient to show that
$V_i ^*  : \mathcal{K}_i \to L^2(K, \mathcal{B}(K), \mu^H)$ is a
surjective isometry.
By Lemma \ref{lem:Radon-Nikodym}, we have
\begin{align*}
\| V_i ^* f \| ^2
& = \frac{1}{n} \int_K | f(\gamma_i (x)) | ^2 d \mu^H (x)
  = \frac{1}{n} \int_K |f(y)|^2 \, d({\gamma_i}_* \mu^H)(y) \\
& = \int_{\gamma_i(K)} |f(y)|^2 \, d \mu^H (y)
  = \int_K |f(y)|^2 \, d \mu^H (y)
  = \| f \|
\end{align*}
for $f \in \mathcal{K}_i$, where $y = \gamma_i (x)$ is a change of variables.
Thus $V_i ^*$ is an isometry.

For $g \in L^2(K, \mathcal{B}(K), \mu^H)$, we define the function
$f: K \to \mathbb{C}$ by
\[
   f(y) =
   \begin{cases}
   \sqrt{n} \, g(\varphi(y)) \quad & \text{if} \quad y \in \gamma_i (K), \\
   0 \quad & \text{if} \quad y \notin \gamma_i (K).
   \end{cases}
\]
Since $\gamma_{i}: K \to \gamma_i(K)$ is bijective
and its inverse is $\varphi|_{\gamma_i(K)}: \gamma_i(K) \to K$, we have
\begin{align*}
\int_K |f(y)|^2 \, d \mu^H(y)
&= n \int_{\gamma_i (K)} |g(\varphi(y))|^2 \, d \mu^H (y) \\
&= n \int_K |g(x)|^2 \, d(\mu^H \circ \gamma_i)(x)
= \int_K |g(x)|^2 \, d \mu^H (x)
\end{align*}
by Lemma \ref{lem:measure}.
Therefore it follows that $f \in \mathcal{K}_i$.
We also have
\[
   (V_i ^* f) (x) = \frac{1}{\sqrt{n}} \, f(\gamma_i(x))
                  = g(\varphi(\gamma_i(x)))
                  = g(x)
\]
for $x \in K$.
Hence $V_i ^*$ is surjective.
\end{proof}

\begin{defn}
We denote by $\mathcal{MC}_{\gamma_1, \dots, \gamma_n}$
the C$^*$-algebra generated by all multiplication operators by continuous
functions in $C(K)$ and composition operators
$C_{\gamma_i}$ by $\gamma_i$ for $i = 1, \dots, n$
on $L^2 (K, \mathcal{B}(K), \mu^H)$.
\end{defn}

The following theorem is the main result of the paper.

\begin{thm} \label{thm:main}
Let $K$ be a compact metric space, 
let $\gamma = (\gamma_1, \dots, \gamma_n)$ be a system of proper contractions
on $K$ and let $\varphi: K \to K$ be measurable.
Suppose that $\gamma_1, \dots, \gamma_n$ are inverse branches of
$\varphi$. Assume that $K$ is self-similar and
the system $\gamma = (\gamma_1, \dots, \gamma_n)$ satisfies
the measure separation condition in $K$.
Then $\mathcal{MC}_{\gamma_1, \dots, \gamma_n}$ is isomorphic to the Cuntz
algebra $\mathcal{O}_n$.
\end{thm}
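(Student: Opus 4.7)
The plan is to establish $\mathcal{MC}_{\gamma_1, \dots, \gamma_n} = C^*(V_1, \dots, V_n)$ and then invoke the Cuntz uniqueness theorem. Set $\mathcal{A} := C^*(V_1, \dots, V_n)$. The inclusion $\mathcal{A} \subseteq \mathcal{MC}_{\gamma_1, \dots, \gamma_n}$ is immediate from $V_i = \tfrac{1}{\sqrt n} C_{\gamma_i}^*$, and conversely $C_{\gamma_i} = \sqrt{n}\, V_i^* \in \mathcal{A}$. The whole matter therefore reduces to showing $M_a \in \mathcal{A}$ for every $a \in C(K)$: once that is in hand, Proposition~\ref{prop:Cuntz_relation} supplies the Cuntz relations in $\mathcal{A}$, and the uniqueness theorem for $\mathcal{O}_n$ (with $n \geq 2$) delivers $\mathcal{A} \cong \mathcal{O}_n$.

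The technical core is the cell identity $V_w V_w^* = M_{\chi_{\gamma_w(K)}}$ for every finite word $w = (w_1, \dots, w_k)$, where $\gamma_w := \gamma_{w_1} \circ \cdots \circ \gamma_{w_k}$. The base case $|w|=1$ is a short computation combining $(V_i^* f)(x) = \tfrac{1}{\sqrt n} f(\gamma_i(x))$ with the formula $(V_i f)(y) = \sqrt{n}\, \chi_{\gamma_i(K)}(y)\, f(\varphi(y))$ that drops out of Lemma~\ref{lem:Radon-Nikodym} and the change-of-variables argument already used in the proof of Proposition~\ref{prop:Cuntz_relation}, together with $\gamma_i(\varphi(y)) = y$ on $\gamma_i(K)$. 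The inductive step proceeds via the transfer identity $V_i M_a V_i^* = M_{\chi_{\gamma_i(K)} (a \circ \varphi)}$ for $a \in L^\infty(K, \mu^H)$, applied to $a = \chi_{\gamma_{w'}(K)}$ with $w = (i, w')$, using that $\chi_{\gamma_{w'}(K)} \circ \varphi = \chi_{\gamma_w(K)}$ holds $\mu^H$-a.e.\ on $\gamma_i(K)$.

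To finish I would iterate the measure separation condition via Lemma~\ref{lem:measure} to obtain $\mu^H(\gamma_w(K) \cap \gamma_v(K)) = 0$ for $|w| = |v|$, $w \neq v$, so that the cells $\{\gamma_w(K)\}_{|w|=k}$ partition $K$ modulo $\mu^H$-null sets. Given $a \in C(K)$ and $\varepsilon > 0$, uniform continuity of $a$ together with the contraction bound $\operatorname{diam}(\gamma_w(K)) \leq c_2^{\,k}\, \operatorname{diam}(K)$ yields, for $k$ sufficiently large and any choice of points $x_w \in \gamma_w(K)$, the estimate $\|a - \sum_{|w|=k} a(x_w)\chi_{\gamma_w(K)}\|_{L^\infty(\mu^H)} \leq \varepsilon$. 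Consequently $\|M_a - \sum_{|w|=k} a(x_w)\, V_w V_w^*\| \leq \varepsilon$, placing $M_a$ in the closed algebra $\mathcal{A}$. The chief subtlety I expect is bookkeeping for measure-zero overlaps: since $\varphi$ is not determined pointwise on $\gamma_i(K) \cap \gamma_j(K)$, identities like $\chi_{\gamma_{w'}(K)} \circ \varphi = \chi_{\gamma_w(K)}$ must be read as $L^\infty(\mu^H)$ equalities, and the measure separation condition is precisely what makes this harmless when transferring to operator identities on $L^2(K, \mu^H)$.
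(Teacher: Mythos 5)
Your proposal is correct and lands on the same approximating elements as the paper --- both arguments reduce the theorem to showing $M_a \in C^*(V_1, \dots, V_n)$ for $a \in C(K)$, approximate $M_a$ by $\sum_{|w|=k} a(x_w)\, V_w V_w^*$ with $x_w \in \gamma_w(K)$ (the paper takes $x_w = \gamma_w(x_0)$), and close the estimate with uniform continuity of $a$ plus $\operatorname{diam}(\gamma_w(K)) \leq c_2^{\,k} \operatorname{diam}(K)$, before invoking uniqueness of $\mathcal{O}_n$ via Proposition \ref{prop:Cuntz_relation}. Where you genuinely diverge is in how the error is controlled. The paper never identifies the range projections: it pushes $M_a$ through the covariance relation $M_a V_w = V_w M_{a \circ \gamma_w}$, writes the error as $\sum_w V_w M_{a\circ\gamma_w - (a\circ\gamma_w)(x_0)} V_w^* = V D V^*$ for a row isometry $V$ and diagonal $D$, and uses $\|VDV^*\| = \|D\|$ to get the norm exactly as $\max_w \|a\circ\gamma_w - (a\circ\gamma_w)(x_0)\|_\infty$. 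You instead prove the cell identity $V_w V_w^* = M_{\chi_{\gamma_w(K)}}$, after which the error collapses to the single multiplication operator $M_{a - \sum_w a(x_w)\chi_{\gamma_w(K)}}$ and the bound is a pure $L^\infty(\mu^H)$ computation. Your route costs an extra lemma (the transfer identity $V_i M_b V_i^* = M_{\chi_{\gamma_i(K)}(b\circ\varphi)}$, its iteration, and the check that the cells $\{\gamma_w(K)\}_{|w|=k}$ tile $K$ up to $\mu^H$-null sets), but it buys a more transparent picture of $C^*(V_1,\dots,V_n)$ --- the projections $V_wV_w^*$ are visibly the cell indicators --- and it correctly isolates the only delicacy, namely that identities involving $\varphi$ on the overlaps $\gamma_i(K)\cap\gamma_j(K)$ need only hold $\mu^H$-a.e.; in fact, since $\varphi\circ\gamma_i = \mathrm{id}$ holds pointwise by hypothesis, your identity $\chi_{\gamma_i(K)}\cdot(\chi_{\gamma_{w'}(K)}\circ\varphi) = \chi_{\gamma_{(i,w')}(K)}$ holds everywhere, not merely a.e. Both arguments are complete; yours trades the paper's slick block-matrix norm computation for a slightly longer but more structural account of the same subalgebra.
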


\begin{proof}
By Proposition \ref{prop:Cuntz_relation},
the C$^*$-subalgebra $C^* (V_1, \dots, V_n)$ generated by $V_1, \dots, V_n$
in $\mathcal{MC}_{\gamma_1, \dots, \gamma_n}$ is isomorphic to the Cuntz
algebra $\mathcal{O}_n$. We only need to show that
$M_a \in C^* (V_1, \dots, V_n)$ for $a \in C(K)$.

For a finite word
$\omega = (\omega_1, \dots, \omega_k) \in \{1, \dots, n \}^k$,
we use the multi-index notation $V_\omega = V_{\omega_1} \dots V_{\omega_k}$
and $\gamma_\omega = \gamma_{\omega_1} \circ \dots \circ \gamma_{\omega_k}$.
Fix $x_0 \in K$.
For distinct finite words
$\omega(1), \dots, \omega(n^k) \in \{1, \dots, n \}^k$, we define
operators $V$ and $D$ on $\bigoplus_{i = 1} ^{n^k} L^2 (K, \mathcal{B}(K), \mu^H)$ by
\[
  V = \begin{pmatrix} V_{\omega(1)} & \cdots & V_{\omega(n^k)} \\
  0 & \cdots & 0 \\ \vdots & \ddots & \vdots \\ 0 & \cdots & 0
  \end{pmatrix}
\]
and
\[
  D = \begin{pmatrix}
  M_{a \circ \gamma_{\omega(1)} - (a \circ \gamma_{\omega(1)}) (x_0)}
  & 0 & \cdots & 0 \\
  0 & M_{a \circ \gamma_{\omega(2)} - (a \circ \gamma_{\omega(2)}) (x_0)}
  & \cdots & 0 \\
  \vdots & \vdots & \ddots & \vdots \\
  0 & 0 & \cdots & M_{a \circ \gamma_{\omega(n^k)}  - (a \circ \gamma_{\omega(n^k)}) (x_0)}
  \end{pmatrix}.
\]
From Proposition \ref{prop:Cuntz_relation}, it follows that
$\sum_{i = 1} ^{n^k} V_{\omega(i)} V_{\omega(i)} ^* = I$ and
$V^* V = I$. Thus we have
\begin{align*}
& \left\| M_a - \sum_{i = 1} ^{n^k} (a \circ \gamma_{\omega(i)}) (x_0)
   V_{\omega(i)} V_{\omega(i)} ^* \right\| \\
& = \left\| M_a \sum_{i = 1} ^{n^k} V_{\omega(i)} V_{\omega(i)} ^*
   - \sum_{i = 1} ^{n^k} (a \circ \gamma_{\omega(i)}) (x_0)
   V_{\omega(i)} V_{\omega(i)} ^* \right\| \\
& = \left\| \sum_{i = 1} ^{n^k} V_{\omega(i)} M_{a \circ \gamma_{\omega(i)}
   - (a \circ \gamma_{\omega(i)}) (x_0)} V_{\omega(i)} ^* \right\| \\
& = \| V D V^* \| \\
& = \| D \| \\
& = \max{ \left\{ \| a \circ \gamma_{\omega(i)} - (a \circ \gamma_{\omega(i)}) (x_0) \|
    \, | \, i = 1, \dots, n^k \right\}}.
\end{align*}
Since $a$ is uniformly continuous
and $\gamma_1, \dots, \gamma_n$ are proper contractions,
the above norm is sufficient small for a sufficient large $k$.
Thus $M_a \in C^* (V_1, \dots, V_n)$, which completes the proof.
\end{proof}

\begin{rem}
In the proof of Theorem \ref{thm:main},
we mainly use multiplication operators and
composition operators themselves.
We can also prove this theorem using the Cuntz-Pimsner algebras
as following.

Consider the C$^*$-algebra $A = C(K)$ and the canonical Hilbert
right $A$-module $X = A^n$. For $i = 1, \dots, n$, we define
a left $A$-action $\phi : A \to \mathcal{L}_A (X)$ by the diagonal matrix
\[
   (\phi (a) ) (x) =  {\rm diag} (a(\gamma_1(x)), \dots, a(\gamma_n(x)))
\]
for $a \in A, \, x \in K$, where we identify $\mathcal{L}_A (X)$ with
$M_n (A)$.
We consider the Cuntz-Pimsner algebra $\mathcal{O}_X$ of the Hilbert
bimodule $X$ over $A$. We note that $\mathcal{O}_X$ is the universal
C$^*$-algebra generated by $a \in A$ and $S_\xi$ with $\xi \in X$
satisfying that $a S_\xi = S_{\phi(a) \xi}, \, S_\xi a = S_{\xi a}, \,
S_\xi S_\eta = \langle \xi, \eta \rangle_A$ for $a \in A, \, \xi, \eta \in X$
and $\sum_{i = 1} ^n S_{u_i} S_{u_i} ^* = 1$, where $\{ u_i \}_{i = 1} ^n$
is the canonical finite basis of $X$.
By Remark 4.8 or Proposition 4.10 in \cite{PWY}, the C$^*$-algebra $\mathcal{O}_X$ is canonically isomorphic to the Cuntz algebra $\mathcal{O}_n$. Hence
$\mathcal{O}_X$ is simple.

It is easily seen that $a S_{u_i} = S_{u_i} (a \circ \gamma_i)$ and $S_{u_i}^* S_{u_i} = 1$ for $a \in A$ and $i = 1, \dots, n$.
On the other hand, we have corresponding equations $M_a V_i = V_i (a \circ \gamma_i), \, V_i^* V_i = I$ and $\sum_{j = 1} ^n V_j V_j ^* = I$ for $a \in A$ and $i = 1, \dots, n$ by Proposition \ref{prop:Cuntz_relation}.
By the universality and the simplicity of $\mathcal{O}_X$, there exists an
isomorphism $\Phi: \mathcal{O}_X \to \mathcal{MC}_{\gamma_1, \dots, \gamma_n}$
such that $\Phi(a) = M_a, \, \Phi(S_{u_i}) = V_i$ for $a \in A$ and $i = 1,
\dots, n$. Therefore the C$^*$-algebra $\mathcal{MC}_{\gamma_1, \dots, \gamma_n}$ is isomorphic to the Cuntz algebra $\mathcal{O}_n$.
\end{rem}

We give some examples for C$^*$-algebras generated by composition
operators $C_{\gamma_1}, \dots, C_{\gamma_n}$ and multiplication operators.

\begin{ex}
Let $K = [0,1]$ and
\[
  \gamma_1 (x) = \frac{1}{2} x, \quad
  \gamma_2 (x) = \frac{1}{2} x + \frac{1}{2}.
\]
Then $K$ is the self-similar set with respect to
$\gamma = (\gamma_1, \gamma_2)$. 
The Hutchinson measure $\mu^H$ on $K$ coincides with
the Lebesgue measure on $K$. 
The system $\gamma$ satisfies the measure
separation condition in $K$.
Let $\varphi: K \to K$ be defined by
\[
  \varphi (y) = \begin{cases}
                 2y & 0 \leq y \leq \frac{1}{2}, \\
                2y + 1 & \frac{1}{2} < y \leq 1.
                \end{cases}
\]
Then $\gamma_1$ and $\gamma_2$ are inverse branches of $\varphi$.
By Theorem \ref{thm:main},
the C$^*$-algebra $\mathcal{MC}_{\gamma_1, \gamma_2}$
is isomorphic to the Cuntz algebra $\mathcal{O}_2$.
The function $\varphi$ is not continuous. Note that
$\varphi$ in Theorem \ref{thm:main} is not necessary continuous.
\end{ex}

\begin{ex}
Let $K = [0,1]$ and
\[
  \gamma_1 (x) = \frac{1}{2} x, \quad  \gamma_2 (x) = - \frac{1}{2} x + 1.
\]
Then $K$ is the self-similar set with respect to
$\gamma = (\gamma_1, \gamma_2)$. 
The Hutchinson measure $\mu^H$ on $K$ coincides with
the Lebesgue measure on $K$. 
The system $\gamma$ satisfies the measure
separation condition in $K$.
Let $\varphi: K \to K$ be defined by
\[
  \varphi (y) = \begin{cases}
                 2y & 0 \leq y \leq \frac{1}{2}, \\
                 -2y + 2 & \frac{1}{2} \leq x \leq 1.
                \end{cases}
\]
The function $\varphi$ is called the tent map.
Then $\gamma_1$ and $\gamma_2$ are inverse branches of $\varphi$.
By Theorem \ref{thm:main},
the C$^*$-algebra $\mathcal{MC}_{\gamma_1, \gamma_2}$
is isomorphic to the Cuntz algebra $\mathcal{O}_2$.
This example is also considered in \cite{H3}.
\end{ex}

\begin{ack}
The author wishes to express his thanks to Professor Yasuo Watatani
for suggesting the problem and for many stimulating conversations.
\end{ack}

\end{document}